\newtheorem{theorem}{Theorem}
\newtheorem{lemma}[theorem]{Lemma}
\newtheorem{proposition}[theorem]{Proposition}
\theoremstyle{definition}
\newtheorem{remark}{Remark}
\newcommand{\Z}{\mathbb{Z}}
\newcommand{\F}{\mathbb{F}}
\newcommand{\Q}{\mathbb{Q}}
\newcommand{\Gal}{\textrm{Gal}}
\newcommand{\im}{\textrm{im}\thinspace}
\newcommand{\oo}{\mathcal{O}}
\newcommand{\ssm}{\smallsetminus}
\newcommand{\End}{\textrm{End}}
\newcommand{\GL}{\textrm{GL}}
\newcommand{\tors}{\textrm{tors}}
\newcommand{\cQ}{\mathcal{Q}}
\newcommand{\Ell}{\mathcal{L}}
\newcommand{\Gss}{G^{\textrm{ss}}}
\newcommand{\oQ}{\overline{\Q}}
\newcommand{\Cyc}{\textrm{Cyc}}
\date{}
\subjclass[2010]{11G05}
\author{Tyler Genao}
\thanks{Email: \texttt{genao.5@osu.edu}.}
\title{New Isogenies of Elliptic Curves Over Number Fields}
\begin{document}
\begin{abstract}
Using Galois representations, we analyze fields of definition of cyclic isogenies on elliptic curves to prove the following uniformity result: for any number field $F$ which has no rational CM, under GRH there exists an effectively computable constant $B:=B(F)\in\Z^+$ such that for any finite extension $L/F$ whose degree $[L:F]$ is coprime to $B$, one has for all elliptic curves $E_{/F}$ that any $L$-rational isogeny on $E$ is $F$-rational. For any number field $F$, under GRH we also prove results for the mod-$\ell$ Galois representations of non-CM elliptic curves with an $F$-rational isogeny of uniformly large prime degree $\ell$.
\end{abstract}
\maketitle

\section{Introduction}
Given a number field $F$, it is of great interest to understand isogenies of elliptic curves which are rational over $F$, in analogy to understanding rational torsion points. When $F=\Q$, a classic theorem of Mazur \cite{Maz78} shows that the only possible prime degrees of $\Q$-rational isogenies are $\ell\in\lbrace 2,3,5,7,11,13,17,19, 37,43,67,163\rbrace$. Following Mazur's work, Kenku \cite{Ken82} has shown that all possible degrees of $\Q$-rational isogenies with cyclic kernel are the integers $n\in[1,19]\cup \lbrace 21,25,27,37,43,67,163\rbrace$. 
For general number fields $F$, if $F$ has no rational \textit{complex multiplication} (henceforth CM), then there exists a uniform, effectively computable bound on degrees of $F$-rational isogenies under the \textit{generalized Riemann hypothesis} (henceforth GRH); see \cite{LV14} and \cite{Ban23}. However, it is currently unknown whether a uniform bound can hold unconditionally.

Nonetheless, one can prove uniformity results on isogenies of elliptic curves defined over number fields \textit{of a particular degree}. For example, if $L$ is a number field of odd degree, then for any elliptic curve $E_{/L}$ that is $\oQ$-isogenous to a $\Q$-rational elliptic curve, all $L$-rational isogenies on $E$ of prime degree $\ell$ must have $\ell\leq 37$; see \cite[Theorem 1.1]{CN21}. This type of result can be leveraged to study torsion from the family of $\Q$-curves, see also \cite{BN}.

In this paper, we prove uniformity results for cyclic isogenies of elliptic curves base-changed from $F$. We study such isogenies through analyzing the curve's Galois representations. Recall that for an elliptic curve $E$ defined over a number field $F$, for each integer $n\in\Z^+$ there exists an action of $G_F:=\Gal(\oQ/F)$ on the $n$-torsion subgroup $E[n]$ of $E$. The information of this Galois action is encoded in the \textit{mod-$n$ Galois representation of $E$.} Once we fix a basis of $E[n]$, we can write this representation as $\rho_{E,n}\colon G_F\rightarrow \GL_2(\Z/n\Z)$. This action extends to an action on the set of cyclic order $n$ subgroups of $E$. If $\phi$ is an isogeny from $E$ whose kernel $C$ is cyclic, then the the field of definition of $\phi$, written as $F(C)$, is the fixed field of the stabilizer of $C$. One key observation towards understanding the rationality of $\phi$ is that the degree $[F(C):F]$ is equal to the size of the orbit of $C$ under the action of $G_F$, and so $\phi$ is $F$-rational if and only if the orbit of $C$ is trivial.

Studying rationality of isogenies through this action of Galois can be useful. As an example, combinative work of several mathematicians, culminating in recent work of Furio and Lombardo \cite{FL}, shows that if $E_{/\Q}$ is a non-CM elliptic curve, then for any prime $\ell>37$, the mod-$\ell$ Galois representation $\rho_{E,\ell}\colon G_\Q\rightarrow \GL_2(\F_\ell)$ is either surjective,
or its image equals the normalizer of a nonsplit Cartan group mod-$\ell$. This fact, combined with a classification of possible images for $\rho_{E,\ell}(G_\Q)$ when $\ell\leq 37$ (see e.g. \cite{GJN20}), implies that the orbit of any cyclic order $n$ subgroup of $E$ is either trivial, or has a size that is a multiple of $2,3,5,7,11,13,17$ or $37$. Consequently, for any number field $L$ whose degree over $\Q$ is coprime to $2\cdot 3\cdot 5\cdot 7\cdot 11\cdot 13\cdot 17\cdot 37=18888870$, for any non-CM elliptic curve $E_{/\Q}$, an isogeny $\phi$ from $E$ is $L$-rational if and only if $\phi$ is $\Q$-rational. In particular, when $\phi$ is cyclic, the degree of $\phi$ is bounded uniformly by Mazur and Kenku's results.

With the above example in mind, let us introduce some terminology. Given an elliptic curve $E_{/F}$ and a cyclic $n$-isogeny with kernel $C\subseteq E(\oQ)$, we say that $C$ (and the isogeny) is \textbf{new} if $C$ is not $F$-rational.
For an extension $L/F$, we say that $C$ is \textbf{new for $L/F$} if $C$ is $L$-rational but not $F$-rational; equivalently, $C$ is new for $L/F$ if $F(C)\subseteq L$ and $[F(C):F]>1$. For example, the preceding paragraph shows that if $L$ is a number field with $[L:\Q]$ coprime to $18888870$, then for any non-CM elliptic curve $E_{/\Q}$, there are no new isogenies on $E$ for $L/\Q$.

The main theorem of this paper is a uniformity result for new isogenies on elliptic curves for $L/F$, where $F$ is any number field with no rational CM.
This theorem can be viewed as a generalization of \cite[Theorem 1]{Gen24}, which concerns new torsion points on elliptic curves for $L/F$.
\begin{theorem}\label{Thm_IsogeniesUnderBaseChange}
Let $F$ be a number field with no rational CM. Then under GRH, there exists an effectively computable constant $B:=B(F)\in\Z^+$ such that for any finite extension $L/F$ whose degree $[L:F]$ is coprime to $B$, there are no new isogenies for $L/F$ on any elliptic curve over $F$.
\end{theorem}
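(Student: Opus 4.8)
The plan is to reduce the statement about cyclic isogenies to one about $\ell$-isogenies for primes $\ell$, then leverage the known structure of mod-$\ell$ Galois representations of elliptic curves with an $F_0$-rational $\ell$-isogeny when $\ell$ is large. Given a new cyclic $n$-isogeny for $L/F_0$ on $E_{/F_0}$ with kernel $C$, since $C$ is cyclic it contains a unique subgroup of each prime order $\ell \mid n$; if $[L:F_0]$ is coprime to $B$, it suffices to handle each prime-power part separately. So the heart of the matter is: there should be a constant $B_0 := B_0(F_0)$ such that whenever $\ell$ is a prime with $\ell \nmid B_0$, and $E_{/F_0}$ has $j$-invariant $\neq 0,1728$, and $C \subseteq E(\oQ)$ is an order-$\ell$ subgroup with $F_0(C) \subseteq L$ and $[L:F_0]$ coprime to $B_0$, then $C$ is already $F_0$-rational. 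The degree $[F_0(C):F_0]$ divides $\#\mathbb{P}^1(\F_\ell) = \ell+1$ in the worst case but, more precisely, equals the size of the orbit of the line $C$ under $\rho_{E,\ell}(G_{F_0}) \subseteq \GL_2(\Z/\ell\Z)$ acting on $\mathbb{P}^1(\F_\ell)$.

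**Next I would** split into the CM and non-CM cases. For non-CM $E_{/F_0}$: by a uniform version of Serre's theorem over $F_0$ (this is presumably what the LV-hypotheses and the unconditional large-$\ell$ results alluded to in the abstract provide — effectively, for $\ell$ larger than a constant depending only on $F_0$, the image $\rho_{E,\ell}(G_{F_0})$ is either all of $\GL_2(\F_\ell)$, or contained in a Borel, or in a normalizer of a Cartan, or in one of finitely many exceptional subgroups), one analyzes the possible orbit sizes of a line $C$. If the image is $\GL_2$ or a Cartan-normalizer, the orbit of any line has size $> 1$, forcing $[F_0(C):F_0] > 1$; one then needs this degree to share a common factor with $B_0$. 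The key observation is that for the Cartan-normalizer case the orbit sizes are controlled ($2$, or divisors of $\ell \pm 1$), and for $\GL_2$ the orbit is all of $\mathbb{P}^1(\F_\ell)$ of size $\ell+1$; by building $\ell+1$, $\ell-1$, $2$, and the orders of the exceptional subgroups into $B_0$ for the finitely many small $\ell$, one kills these cases. The remaining case is that $\rho_{E,\ell}(G_{F_0})$ is contained in a Borel, i.e., $E$ itself has an $F_0$-rational $\ell$-isogeny with some kernel $C'$; if $C = C'$ we are done, and if $C \neq C'$ then $E$ has two independent $F_0$-rational $\ell$-isogenies, so $\rho_{E,\ell}(G_{F_0})$ lies in a split Cartan — again one bounds such $\ell$ uniformly (via the GRH-free results or the LV-hypotheses, à la \cite{LV14}, \cite{Ban23}) and folds the survivors into $B_0$. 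For the CM case: the class of $j$-invariants of CM elliptic curves over $F_0$ of bounded degree is finite, the possible $\ell$ with $F_0$-rational $\ell$-isogeny are controlled by the CM order, and one argues directly that $C$ must be the CM-canonical kernel or handle the finitely many $j$-invariants by hand, absorbing everything into $B_0$.

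**The final step** is to assemble the constant: set $B = \prod B_0$-factors from the non-CM Borel/Cartan analysis, the CM bound, the finitely many exceptional-image primes, and the numbers $\ell \pm 1$, $2$ for $\ell$ below the uniformity threshold, plus whatever is needed so that the prime-power reduction at the start is legitimate (e.g., ensuring $2 \mid B$ so that odd-degree extensions behave as in the $F_0 = \Q$ prototype, cf. \cite{CN21}). Then for $[L:F_0]$ coprime to this $B$, a new cyclic isogeny for $L/F_0$ would give, after passing to a prime component, a new $\ell$-isogeny for an intermediate field with degree coprime to $B_0$, contradicting the orbit-size bounds above.

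**The main obstacle** I expect is the non-CM Borel case: ruling out that $E$ has an $F_0$-rational $\ell$-isogeny whose kernel is genuinely different from $C$, i.e., bounding the primes $\ell$ for which an elliptic curve over $F_0$ (of $j$-invariant $\neq 0,1728$) can have two independent rational $\ell$-isogenies or a rational $\ell$-isogeny together with a new one. This is exactly where the full strength of the LV-hypotheses — a Larson–Vaintrob-style effective bound over $F_0$, unconditionally under those hypotheses — must be invoked, and it is also the step where the analysis of \cite{Gen24} on new torsion points gets upgraded from points to lines. Making the dependence on $F_0$ clean (so that $B$ depends only on $F_0$ and not on $E$ or $L$) requires care in tracking which constants come from uniform boundedness results versus which are genuinely $F_0$-specific.
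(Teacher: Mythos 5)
Your overall shape — reduce to prime-power (then prime) degree, then analyze orbit sizes of a line under $\rho_{E,\ell}(G_{F_0})$ acting on $\mathbb{P}^1(\F_\ell)$, and use the LV-hypotheses (via Larson--Vaintrob) to kill the Borel case for $\ell$ large — is the right skeleton and matches the paper's strategy. But there is a genuine gap at the heart of the large-$\ell$ analysis.

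The gap is in the Cartan-normalizer case for $\ell$ above the uniformity threshold. You say ``the orbit sizes are controlled ($2$, or divisors of $\ell\pm 1$)'' and that one can ``build $\ell+1$, $\ell-1$, $2$ \ldots into $B_0$ for the finitely many small $\ell$.'' For small $\ell$ this is fine, but for the unboundedly many large $\ell$ you cannot put $\ell\pm 1$ into a constant $B$ depending only on $F_0$. To run the coprimality argument you need a \emph{fixed} prime $p$ (independent of $\ell$ and $E$) that divides $[F_0(C):F_0]$ whenever the latter is $>1$. For the full group $\GL_2(\ell)$ and the allowed subgroups of $N_{ns}(\ell)$, the orbit is all of $\mathbb{P}^1(\F_\ell)$, size $\ell+1$, which is even — so $p=2$ works there. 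But for a subgroup $G\subseteq N_s(\ell)$, a priori a line's orbit can have odd size dividing $\ell-1$: nothing in ``being contained in the split Cartan normalizer'' automatically forces even orbits. This is exactly what the paper's Proposition~\ref{Prop_EvenDegreeFieldOfDefn} establishes, and it is nontrivial: using that $\det(G)$ surjects onto $\F_\ell^\times$ (via inertia at $\ell$, since $\ell$ is unramified in $F_0$) together with the lower bound $C_s(\ell)^e\subseteq G$ from Theorem~\ref{Thm_BaseChangeImprovement}, one shows the reflection $\begin{bmatrix}1&0\\0&-1\end{bmatrix}$ lies in $G$, which pairs up the lines $C_k$ and $C_{-k}$ and forces all orbits to be even. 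Without this (or some substitute), the argument does not close.

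Two secondary remarks. First, your prime-power reduction is only sketched; the paper reduces to prime-power order via \cite[Proposition 3.5]{Cla} (valid for $j\neq 0,1728$, which is why the hypothesis appears) and then runs an induction on the exponent, using Lemma~\ref{Lemma_DegreeOfTorsionOverIsogeny} and \cite[Proposition 4.6]{GJN20} to control $[F_0(C):F_0(\ell C)]$ by $\ell^n(\ell-1)^2$; a new $\ell$-isogeny at a higher power does not by itself give a new order-$\ell$ subgroup, so this inductive step cannot be skipped. Second, your CM/non-CM dichotomy is not how the paper proceeds: under the LV-hypotheses $F_0$ has no RCM, so CM curves over $F_0$ land in Cartan normalizers (not Cartans themselves) and are absorbed into the same orbit analysis rather than handled as a separate finite list of $j$-invariants. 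Your route could be made to work but is redundant here.
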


As will be made precise in the following section, the degree of the field of definition of an $\ell$-isogeny on $E_{/F}$ is determined by the size of its kernel's orbit under the action of $G_{F}$. When $F$ has no rational CM and $\ell$ is sufficiently large with respect to $F$, under GRH the group $E[\ell]$ of $\ell$-torsion points on $E$ is an irreducible $\F_\ell$-module, which restricts the possible images $\rho_{E,\ell}(G_{F})$ of the mod-$\ell$ Galois representation of $E$ over $F$. This implies that the image $\rho_{E,\ell}(G_{F})$ is ``relatively uniformly large" in $\GL_2(\F_\ell)$, as described in Theorem \ref{Thm_BaseChangeImprovement} below. 
This theorem can be viewed as progress towards Serre's uniformity question for number fields larger than $\Q$.
\begin{theorem}\label{Thm_BaseChangeImprovement}
Let $F$ be a number field with no rational CM. Then under GRH, there exists an effectively computable constant $c:=c(F)\in\Z^+$ such that for all primes $\ell>c$, one has the following: for an elliptic curve $E_{/F}$, if the image $G:=\rho_{E,\ell}(G_{F})$ does not equal $\emph{GL}_2(\F_\ell)$, then it is contained in the normalizer of a Cartan group in $\emph{GL}_2(\F_\ell)$ up to conjugacy.

Let $N_s(\ell)$ and $N_{ns}(\ell)$ denote the normalizer of the split and nonsplit Cartan group mod-$\ell$, respectively, and let $G(\ell)$ be as defined in $\S \ref{Sect_SubgpsOfGL2}$. Then up to a choice of basis for $E[\ell]$, one of the following holds:
\begin{enumerate}[a.]
\item If $G\subseteq N_s(\ell)$ then $C_s(\ell)^{e}\subseteq G$ for some $e\in \lbrace 1,2,3,4,6\rbrace$, and $G$ contains the subgroup of scalar matrices. Furthermore, the index of $G$ in $N_s(\ell)$ satisfies
\[
[N_s(\ell):G]\mid \gcd(\ell-1,e).
\]
\item If $G\subseteq N_{ns}(\ell)$ then $G=N_{ns}(\ell)$ or $G(\ell)$. If $\ell\equiv 1\pmod 3$ then $G=N_{ns}(\ell)$.
\end{enumerate}
\end{theorem}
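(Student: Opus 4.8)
The plan is to feed the Larson--Vaintrob classification into a local-to-global analysis of the two characters that cut out the index-$2$ Cartan subgroup. First I would fix a constant $c_0=c_0(F_0)$ exceeding both constants furnished by Theorem \ref{Thm_LV}, so that for every prime $\ell>c_0$ and every $E_{/F_0}$ one has: either $G:=\rho_{E,\ell}(G_{F_0})=\GL_2(\Z/\ell\Z)$, or $G$ is (conjugate into) the normalizer of a Cartan subgroup; and, moreover, $E$ admits no $F_0$-rational $\ell$-isogeny, so $E[\ell]$ is an irreducible $(\Z/\ell\Z)[G_{F_0}]$-module. Irreducibility rules out $G$ lying inside a Borel and inside a Cartan itself, so $G$ is conjugate into $N_s(\ell)$ or $N_{ns}(\ell)$ with image meeting the Cartan in an index-$2$ subgroup. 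This gives the first assertion; the rest of $c$ will be spent on the two numbered cases.

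\textbf{Split case.} Put $G_0:=G\cap C_s(\ell)$, let $L_1,L_2\subseteq E[\ell]$ be the two $G_0$-stable lines (swapped by $G/G_0$), and let $\chi_1,\chi_2\colon G_0\to(\Z/\ell\Z)^\times$ be the characters on $L_1,L_2$, so $\chi_1\chi_2=\chi|_{G_0}$ with $\chi:=\det\rho_{E,\ell}$ the cyclotomic character. Over the quadratic extension $L/F_0$ fixed by $G_0$, the line $L_1$ is an $L$-rational $\ell$-isogeny kernel with isogeny character $\chi_1$, and $\chi_2=\chi_1^{\sigma}$ for $\sigma$ generating $\Gal(L/F_0)$; since $\chi$ is $\sigma$-invariant, $\chi_1\chi_1^{\sigma}=\chi$. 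I would then run the Momose--Larson--Vaintrob structure theory on $\chi_1$ over $L$: inertia above $\ell$ acts on $L_1,L_2$ through Serre's fundamental characters (good ordinary or multiplicative reduction at each prime above $\ell$, forced by the existence of the stable lines), and at primes of additive reduction above $\ell$ the isogeny character is twisted by a character whose order is one of the Kodaira semistability defects $\{1,2,3,4,6\}$ (this is the expected provenance of that list); combining the local exponents at $\ell$ with the relation $\chi_1\chi_1^{\sigma}=\chi$ and the bad-reduction (``$j$-invariant denominator'') bound away from $\ell$ shows that the ratio character $r:=\chi_1\chi_2^{-1}=\chi_1^{2}\chi^{-1}$ has order equal to some $e\in\{1,2,3,4,6\}$, i.e.\ $C_s(\ell)^{e}\subseteq G_0$. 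The containment of all scalars follows from the local picture at $\ell$ (the image of inertia is either a ``full'' diagonal torus, forcing $G_0=C_s(\ell)$, or a full group of scalars after the additive twist), and finally $[N_s(\ell):G]=[(\Z/\ell\Z)^\times:\im r]$ divides $\gcd(\ell-1,e)$ by the $w$-equivariance of $G_0$ inside $C_s(\ell)$.

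\textbf{Nonsplit case.} Here $G_0:=G\cap C_{ns}(\ell)\subseteq C_{ns}(\ell)\cong\F_{\ell^2}^\times$, acting on $E[\ell]$ by multiplication, which has no $\F_\ell$-rational stable line; hence inertia above $\ell$ acts irreducibly and $E$ has supersingular (or additive) reduction at the primes above $\ell$, and the relevant character $\chi_1\colon G_L\to\F_{\ell^2}^\times$ is governed by the level-$2$ fundamental characters, with norm $\chi$ and an anticyclotomic relation $\chi_1\chi_1^{\sigma}=\chi$ coming from the nonsplit Weyl element. Re-running the local analysis, the deviation of $\chi_1$ from ``$\psi_2^{(\ell+1)/2}$'' is a character valued in $\F_{\ell^2}^\times/\F_\ell^\times\cong\Z/(\ell+1)\Z$ whose order is now forced to lie in $\{1,3\}$ (the quadratic/quartic possibilities $2,4,6$ available in the split case are absorbed into the nonsplit normalizer, since its Weyl element is not in $\SL$), and order $3$ requires $3\mid\ell+1$. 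Thus $G=N_{ns}(\ell)$ or $G(\ell)$, and $G=N_{ns}(\ell)$ when $\ell\equiv1\pmod3$.

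\textbf{Main obstacle.} The delicate step is uniformity in $F_0$: a priori the character $\chi_1^{12}\chi^{-6}$ (resp.\ its nonsplit analogue) is only known to be unramified everywhere, hence a character of $\cl(L)$, whose order is \emph{not} bounded as $L$ ranges over the quadratic extensions attached to the various pairs $(E,\ell)$; to force $e$ into the universal set $\{1,2,3,4,6\}$ (resp.\ $\{1,3\}$) rather than growing with $\ell$, one must kill this class-group twist. This is where GRH is used, via an effective Chebotarev argument in the style of Larson--Vaintrob applied to $E$ together with its point on $X_s^{+}(\ell)$ (resp.\ $X_{ns}^{+}(\ell)$), combined with a polynomial bound on the conductor of $E_{/L}$ in terms of $\ell$ and $[F_0:\Q]$ and with the anticyclotomic relation $\tau^{\sigma}=\tau^{-1}$, which cuts the surviving ambiguity down to a bounded group. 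Once that is in hand, the identification of the precise $e\in\{1,2,3,4,6\}$ and the divisibility $[N_s(\ell):G]\mid\gcd(\ell-1,e)$ are finite computations inside $N_s(\ell)$ and $N_{ns}(\ell)$, using that $G_0$ is an $\ell$-universal $w$-equivariant subgroup of the Cartan containing $C_s(\ell)^{e}$ (resp.\ the index-$e$ subgroup of $C_{ns}(\ell)$).
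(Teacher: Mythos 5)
The paper's own proof of Theorem \ref{Thm_BaseChangeImprovement} is a short corollary: it quotes \cite[Theorem 2]{Gen24} (which already gives the $\GL_2$/normalizer-of-Cartan trichotomy together with the numerical descriptions in cases (1) and (2), but also leaves open a third possibility, namely $G\subseteq C_{ns}(\ell)$ up to conjugacy), and then adds the single new observation that $G\subseteq C_{ns}(\ell)$ is diagonalizable over $\F_{\ell^2}$, so that $E[\ell]\otimes_{\F_\ell}\overline{\F_\ell}$ admits a degree-$1$ associated character; Theorem \ref{Thm_LV} then forces either RCM over $F_0$ or failure of GRH for $F_0[\sqrt{-\ell}]$, contradicting the LV-hypotheses. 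Your proposal instead tries to rederive the structure theory of $G$ from scratch via fundamental characters at $\ell$, Kodaira defects, and an anticyclotomic relation. That is a much heavier route than the paper takes, and as a sketch it leaves the essential numerical facts (the exponent set $\{1,2,3,4,6\}$, the $N_{ns}(\ell)$ versus $G(\ell)$ dichotomy, and the divisibility $[N_s(\ell):G]\mid\gcd(\ell-1,e)$) to unwritten "finite computations."

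More seriously, there is a genuine gap at the very first step. You write that ``irreducibility rules out $G$ lying inside a Borel and inside a Cartan itself, so $G$ meets the Cartan in an index-$2$ subgroup.'' Irreducibility of $E[\ell]$ over $\F_\ell$ does rule out $G\subseteq B(\ell)$ and hence $G\subseteq C_s(\ell)$, but it does \emph{not} rule out $G\subseteq C_{ns}(\ell)$: any non-scalar subgroup of $C_{ns}(\ell)\cong\F_{\ell^2}^\times$ already acts irreducibly on $E[\ell]\cong\F_{\ell^2}$ over $\F_\ell$, since a proper stable $\F_\ell$-subspace would be a nontrivial ideal of the field $\F_{\ell^2}$. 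So your deduction that $G$ surjects onto $N_{ns}(\ell)/C_{ns}(\ell)$ is unjustified in the nonsplit case. Ruling out $G\subseteq C_{ns}(\ell)$ is exactly the novel content of this theorem over \cite[Theorem 2]{Gen24}, and it requires passing to \emph{absolute} reducibility and invoking Theorem \ref{Thm_LV}: the eigencharacter $\psi\colon G_{F_0}\to\F_{\ell^2}^\times$ of $G\subseteq C_{ns}(\ell)$ is a degree-$1$ character of $E[\ell]\otimes\overline{\F_\ell}$, and Larson--Vaintrob's dichotomy then forces RCM over $F_0$ or failure of GRH. You never invoke this mechanism, so your argument as written does not actually dispose of the nonsplit-Cartan case that the paper is improving.
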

The conclusion of Theorem \ref{Thm_BaseChangeImprovement} is an improvement of \cite[Theorem 2]{Gen24} in the case where $G$ is contained in the normalizer $N_{ns}(\ell)$ of the nonsplit Cartan group mod-$\ell$, up to conjugacy. More specifically, when $\ell$ is large, if up to conjugacy $G$ is also contained in the nonsplit Cartan group $C_{ns}(\ell)$, then it follows that $G$ is diagonalizable over the quadratic extension $\F_{\ell^2}$ of $\F_\ell$. Thus, $E[\ell]\otimes _{\F_\ell}\overline{\F_\ell}$ has a degree 1 associated character with values in $\F_{\ell^2}$. By Theorem \ref{Thm_LV}, this implies that $F$ has rational CM, or that GRH fails for $F[\sqrt{-\ell}]$. Therefore, assuming both GRH and that $F$ has no rational CM, we may conclude that $G$ cannot be contained in $C_{ns}(\ell)$ up to conjugacy when $\ell$ is large; thus, Theorem \ref{Thm_BaseChangeImprovement} follows at once from \cite[Theorem 2]{Gen24}.
 
Theorem \ref{Thm_IsogeniesUnderBaseChange} is in part a consequence of the following result, which can be viewed as a generalization of \cite[Proposition 3.3]{CN21} from $\Q$ to $F$.
\begin{proposition}\label{Prop_EvenDegreeFieldOfDefn}
Let $F$ be a number field with no rational CM. Then under GRH, there exists an effectively computable constant $c:=c(F)\in\Z^+$ such that for all primes $\ell> c$
and for all elliptic curves $E_{/F}$, the field of definition for an $\ell$-isogeny of $E$ has even degree over $F$.
\end{proposition}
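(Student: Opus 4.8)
The plan is to translate the statement into one about orbit sizes of the mod‑$\ell$ image on $\PP^1(\F_\ell)$ and then run through the short list of possibilities for that image supplied by Theorem~\ref{Thm_BaseChangeImprovement}. Recall that for $E_{/F_0}$ and a cyclic subgroup $C\subseteq E(\oQ)$ of prime order $\ell$, viewing $C$ as a line in $E[\ell]$ one has $C\in\PP^1(\F_\ell)$, and if $G:=\rho_{E,\ell}(G_{F_0})$ then the field of definition satisfies $[F_0(C):F_0]=|G\cdot C|$, the size of the $G$‑orbit of $C$ for the natural action of $G$ on $\PP^1(\F_\ell)$. So the proposition amounts to: for all sufficiently large $\ell$, every $G$‑orbit on $\PP^1(\F_\ell)$ has even cardinality. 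I would take $c=c(F_0)$ large enough that, for $\ell>c$: both Theorem~\ref{Thm_BaseChangeImprovement} and Theorem~\ref{Thm_LV} apply; in particular, under the LV‑hypotheses, $E$ has no $F_0$‑rational $\ell$‑isogeny (so $G$ is not conjugate into a Borel subgroup) and $G$ is not conjugate into the nonsplit Cartan subgroup; moreover $\ell$ is unramified in $F_0$, so $\det\rho_{E,\ell}=\chi_\ell$ (the mod‑$\ell$ cyclotomic character) is surjective onto $\F_\ell^\times$; and $\ell\geq 3$.

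Now split into the cases of Theorem~\ref{Thm_BaseChangeImprovement}. If $G=\GL_2(\Z/\ell\Z)$, the action on $\PP^1(\F_\ell)$ is transitive, so the unique orbit has size $\ell+1$, which is even. If $G\subseteq N_{ns}(\ell)$, then $G=N_{ns}(\ell)$ or $G=G(\ell)$; identifying $\PP^1(\F_\ell)$ with the cyclic group $\F_{\ell^2}^\times/\F_\ell^\times$ of order $\ell+1$, the group $N_{ns}(\ell)$ acts through its dihedral quotient $N_{ns}(\ell)/\F_\ell^\times$, with $C_{ns}(\ell)/\F_\ell^\times$ the cyclic ``rotation'' subgroup. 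Since $G\not\subseteq C_{ns}(\ell)$, the image of $G$ in this dihedral group meets the nontrivial coset, and a short dihedral computation shows that every $G$‑orbit then has size $s$ or $2s$, where $s$ is the order of the rotation part of the image of $G$. For $G=N_{ns}(\ell)$ one has $s=\ell+1$; and for $G=G(\ell)$ — which by the description in $\S\ref{Sect_SubgpsOfGL2}$ has index $3$ in $N_{ns}(\ell)$ and arises only when $3\mid\ell+1$ — one has $s=(\ell+1)/3$, which is even because $\ell\equiv 2\pmod 3$ together with $\ell$ odd forces $\ell\equiv 5\pmod 6$. In both cases $s$ is even, so all orbits are.

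The substantive case is $G\subseteq N_s(\ell)$. Put $N:=G\cap C_s(\ell)$; since $G\not\subseteq C_s(\ell)$ we have $[G:N]=2$, and $G$ contains an element interchanging the two $C_s(\ell)$‑eigenlines, so this pair of lines forms a single $G$‑orbit of size $2$. On the remaining $\ell-1$ lines, $C_s(\ell)$ acts simply transitively through $\chi\colon\textrm{diag}(a,d)\mapsto d/a$, and the same dihedral bookkeeping shows every $G$‑orbit there has size $s'$ or $2s'$ with $s'=|\chi(N)|$. By Theorem~\ref{Thm_BaseChangeImprovement}(1), $\chi(N)\supseteq(\F_\ell^\times)^e$ and $[\F_\ell^\times:\chi(N)]=[N_s(\ell):G]$ divides $\gcd(\ell-1,e)\leq 6$, so $s'$ is even unless $\ell\equiv 3\pmod 4$ and $[\F_\ell^\times:\chi(N)]\in\{2,6\}$. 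In that exceptional configuration, $N\subseteq C_s(\ell)$ is a diagonal index‑$2$ subgroup of $G$: the parity forces $\chi(N)\subseteq(\F_\ell^\times)^2$, hence $\det(N)\subseteq(\F_\ell^\times)^2$ (because $ad=(d/a)\cdot a^2$), and since $\det\rho_{E,\ell}$ is surjective the reflections of $G$ must have non‑square determinant; thus the quadratic character $G_{F_0}\twoheadrightarrow G/N\cong\Z/2$ equals the Legendre symbol mod $\ell$ composed with $\chi_\ell$, which (as $\ell\equiv 3\pmod 4$) cuts out the field $F_0(\sqrt{-\ell})$. Over $F_0(\sqrt{-\ell})$ the image $\rho_{E,\ell}(G_{F_0(\sqrt{-\ell})})=N$ is diagonal, so $E$ acquires a rational $\ell$‑isogeny there; this is exactly what Theorem~\ref{Thm_LV} forbids under the LV‑hypotheses for $\ell>c$ — the ``GRH fails for $F_0[\sqrt{-\ell}]$'' branch discussed after Theorem~\ref{Thm_BaseChangeImprovement}. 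Hence the exceptional configuration does not occur, and all $G$‑orbits on $\PP^1(\F_\ell)$ are even.

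The main obstacle is precisely this last point. Over $\Q$ (cf.\ \cite{CN21}) only $\GL_2(\Z/\ell\Z)$ and $N_{ns}(\ell)$ survive for large $\ell$, both transitive on $\PP^1$, so the conclusion is immediate; over a general $F_0$ one cannot eliminate proper subgroups of the split Cartan normalizer by group theory alone, since for them the orbit size $(\ell-1)/\gcd(\ell-1,e)$ genuinely can be odd when $\ell\equiv 3\pmod 4$. Ruling these out requires recognizing the diagonal index‑$2$ subgroup as coming from base change to $F_0(\sqrt{-\ell})$ and feeding the resulting $\ell$‑isogeny back into the Larson–Vaintrob bound; verifying cleanly that the relevant quadratic character is the one of $F_0(\sqrt{-\ell})$, and that the surjectivity of $\det\rho_{E,\ell}$ is not itself enough to kill the configuration, is the part that demands care.
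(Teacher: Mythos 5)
Your overall strategy matches the paper's: translate the statement into evenness of $G$-orbit sizes on $\PP^1(\F_\ell)$, dispatch the transitive cases $\GL_2(\ell)$, $N_{ns}(\ell)$, $G(\ell)$, and then treat $G\subseteq N_s(\ell)$ by analyzing the index-$2$ diagonal subgroup $N=G\cap C_s(\ell)$. Up to and including the deduction that an odd orbit forces $\chi(N)\subseteq(\F_\ell^\times)^2$ and hence $\det(N)\subseteq(\F_\ell^\times)^2$, your argument runs in parallel with the paper's. But the final step has a genuine gap, and you also drop a case.

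First, the minor issue: the ``exceptional configuration'' is not exhausted by $\ell\equiv 3\pmod 4$ with $[\F_\ell^\times:\chi(N)]\in\{2,6\}$. Since $[\F_\ell^\times:\chi(N)]$ can be $4$ when $e=4$ and $4\mid\ell-1$, the orbit size $s'=(\ell-1)/[\F_\ell^\times:\chi(N)]$ is also odd when $\ell\equiv 5\pmod 8$ and the index is $4$ (the paper handles exactly this case). In that subcase the quadratic subfield of $F_0(\zeta_\ell)$ cut out by the Legendre character is $F_0(\sqrt{\ell})$, not $F_0(\sqrt{-\ell})$, so your appeal to the imaginary quadratic field breaks down before you even reach Larson--Vaintrob.

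Second, and more seriously, the invocation of Theorem~\ref{Thm_LV} at the end does not go through. Theorem~\ref{Thm_LV} is a statement about an elliptic curve over the \emph{fixed} field $F_0$, with the hypothesis that $E[\ell]\otimes_{\F_\ell}\overline{\F_\ell}$ is reducible as a $G_{F_0}$-module, and it only applies for $\ell\notin S_{F_0}$. In your configuration $G\subseteq N_s(\ell)$ with $G\not\subseteq C_s(\ell)$, so $G$ interchanges the two eigenlines and $E[\ell]\otimes\overline{\F_\ell}$ is absolutely irreducible over $F_0$; the hypothesis of Theorem~\ref{Thm_LV} is simply not met. You are really trying to apply it to $E$ over $F_0(\sqrt{-\ell})$, but that field varies with $\ell$, and there is no uniform control on $S_{F_0(\sqrt{-\ell})}$ as $\ell\to\infty$; it may well contain $\ell$. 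The ``GRH fails for $F_0[\sqrt{-\ell}]$'' clause is a \emph{conclusion} in case (2) of Theorem~\ref{Thm_LV}, not a statement that base-changing to $F_0(\sqrt{-\ell})$ is forbidden. So ``LV forbids this'' is not justified.

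The correct way to close the gap (and what the paper does) is local at $\ell$. Write $M$ for the fixed field of $N$ inside $F_0(E[\ell])$. By the Weil pairing, $\det\rho_{E,\ell}=\chi_\ell$, and since $\ell$ is unramified in $F_0$, $\chi_\ell$ already surjects onto $\F_\ell^\times$ on inertia at $\ell$. Using Serre's Lemme 2 to see that $\ell$ is unramified in $M$, one deduces $\det(N)=\det\rho_{E,\ell}(G_M)=\F_\ell^\times$, directly contradicting $\det(N)\subseteq(\F_\ell^\times)^2$. (Equivalently, your observation that the quadratic character cuts out $F_0(\sqrt{-\ell})$ is the right intuition -- that extension is ramified at $\ell$, whereas $M/F_0$ must be unramified at $\ell$ -- but you must make this ramification argument, not appeal to Theorem~\ref{Thm_LV}.) This local step is what ``buys'' the result for all the exceptional parities simultaneously; it is the essential, non-group-theoretic ingredient your write-up is missing.
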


In the final section of this paper, we will prove a uniformity result for elliptic curves defined over a number field $F$, where we do not assume that $F$ has no rational CM. In this theorem, part a. is complementary to Theorem \ref{Thm_BaseChangeImprovement}, providing a result towards Serre's uniformity question over $F$; part b. is a consequence of part a., and provides a necessary condition for the existence of new isogenies of large prime degree. 
\begin{theorem}\label{Thm_ModEllRep_ofEllCurve_With1_EllIsogeny}
Let $F$ be a number field. Assume GRH, and fix a prime $\ell\geq \max\lbrace c(F), 72[F:\Q]\rbrace$ with $c(F)$ as in Proposition \ref{Prop_EvenDegreeFieldOfDefn}. If $E_{/F}$ is an elliptic curve with exactly one $F$-rational $\ell$-isogeny, then $E$ must be non-CM. Furthermore, one has the following:
\begin{enumerate}[a.]
\item Up to conjugacy, the image $G:=\rho_{E,\ell}(G_{F})$ is contained in $B(\ell)$, the Borel subgroup of $\emph{GL}_2(\F_\ell)$. If $G\subseteq B(\ell)$, then one has 
\[
[B(\ell):G]\mid 864[F:\Q].
\]
\item For an order $\ell$ subgroup $C\subseteq E(\oQ)$, either $[F(C):F]=1$ or $[F(C):F]=\ell$.
\end{enumerate}
\end{theorem}
\begin{remark}\label{Rmk_ImplicitConstant}
The theorems and proposition above use the same constant $c(F)$: this is the least positive integer such that for all primes $\ell\geq c(F)$, one has $\ell\geq \max\lbrace 74, 15[F:\Q]+2\rbrace$, $\ell$ is unramified in $F$ and $\ell\not\in S_{F}$, where $S_F$ is defined in Theorem \ref{Thm_LV}.
\end{remark}

\section*{Acknowledgments}
The author thanks the referee for their comments and suggestions, which helped improve the quality of this paper.
\section{Galois Action on Torsion and Isogenies}
In this section, we will set up some notation and terminology for the rest of this paper, as well as describe and prove some basic results.
\subsection{The action of Galois on $n$-torsion points of elliptic curves}
For an integer $n>1$, we let $\GL_2(n):=\GL_2(\Z/n\Z)$ be the group of $2\times 2$ invertible matrices over $\Z/n\Z$. If we let $e_1:=\begin{bmatrix}
1\\0
\end{bmatrix}$ and $e_2:=\begin{bmatrix}
0\\1
\end{bmatrix}$ be the unit column vectors over $\Z/n\Z$, then $\GL_2(n)$ acts by left multiplication on the $\Z/n\Z$-module $V$ generated by $e_1$ and $e_2$, which is free of rank two. 

Once and for all, fix an algebraic closure $\oQ$ of $\Q$.
Given an elliptic curve $E_{/F}$ and an integer $n\in\Z^+$, the $n$-torsion subgroup $E[n]:=E(\oQ)[n]$ is a free rank two $\Z/n\Z$-module \cite[Corollary 6.4]{Sil09}; a choice of basis $\lbrace P,Q\rbrace$ for $E[n]$ gives a $\Z/n\Z$-module isomorphism $E[n]\cong V$.
In fact, the absolute Galois group $G_{F}:=\Gal(\oQ/F)$ acts $\Z/n\Z$-linearly on $E[n]$. With respect to a basis $\lbrace P,Q\rbrace$ of $E[n]$, this action is explicitly described by the \textit{mod-$n$ Galois representation}
\[
\rho_{E,n,P,Q}\colon G_{F}\rightarrow \GL_2(n).
\]
This map is as follows: if $\sigma\in G_{F}$ acts on $E[n]$ via $P^\sigma=aP+cQ$ and $Q^\sigma=bP+dQ$, then one has $\rho_{E,n,P,Q}(\sigma)=\begin{bmatrix}
a&b\\
c&d
\end{bmatrix}$. By the Weil pairing, the image of the determinant of $\rho_{E,n,P,Q}(G_F)$ is equal to the image of the mod-$n$ cyclotomic character on $G_F$.

 We will usually write $G:=\rho_{E,n}(G_F)$ instead of $\rho_{E,n,P,Q}(G_F)$ for the image of the mod-$n$ Galois representation over $F$; this notation is well-defined up to conjugacy, but we will often work with an implicitly fixed basis. One always has $G\cong \Gal(F(E[n])/F)$, where $F(E[n])$ is the $n$-division field of $E$ over $F$.

\subsection{Cyclic isogenies of elliptic curves}
Given an algebraic extension $F/\Q$ and two elliptic curves ${E_1}_{/F}$ and ${E_2}_{/F}$, an \textit{$F$-isogeny from ${E_1}$ to $E_2$} is a nontrivial $F$-rational morphism of curves $\phi\colon E_1\rightarrow E_2$ which fixes the point at infinity: $\phi(O_{E_1})=O_{E_2}$. Such an isogeny induces a group homomorphism $\phi\colon E_1(F)\rightarrow E_2(F)$ \cite[Theorem 4.8]{Sil09}. Being $F$-isogenous defines an equivalence relation on elliptic curves over $F$, and the equivalence class of an elliptic curve is called its \textit{$F$-isogeny class.} 

Given an isogeny $\phi\colon E_1\rightarrow E_2$, the kernel $C:=\ker\phi$ is a finite subgroup of $E_1[\tors]:=E_1(\oQ)[\tors]$; if $C$ is cyclic of order $n$, we say that $\phi$ is a \textit{cyclic $n$-isogeny.} Since $\phi$ is $F$-rational, it follows that $C$ is stable under the action of $G_F$: that is, for all $\sigma\in G_F$ one has $C^\sigma=C$. Conversely, any cyclic order $n$ subgroup $C\subseteq E_1[\tors]$ which is stable under the action of $G_F$ induces an $F$-rational isogeny on $E_1$ whose kernel is $C$ \cite[Exercise 3.13.e]{Sil09}. For this reason, we sometimes refer to cyclic order $n$ subgroups of $E_1[n]$ as cyclic $n$-isogenies; when $n=\ell$ is prime, an $\ell$-isogeny is automatically cyclic.

For an elliptic curve $E_{/F}$, the action of $G_F$ on $E[n]$ extends to an action on its set of cyclic order $n$ subgroups. In general, if we let $\Cyc(V)$ denote the set of nontrivial cyclic $\Z/n\Z$-submodules of the free rank two $\Z/n\Z$-module $V:=\langle e_1,e_2\rangle$, then the action of $\GL_2(n)$ on $V$ extends to an action on $\Cyc(V)$: for $C=\langle v\rangle \in \Cyc(V)$ and $m\in \GL_2(n)$, we set $m\cdot C:= \langle m\cdot v\rangle$. We will denote the set of cyclic order $n$ subgroups in $E$ by $\Cyc(E,n):=\Cyc(E[n])$.
Matters simplify in the case where $n=\ell$ is prime: one has $\# \Cyc(V)=\ell+1$, since the nontrivial order $\ell$ subgroups of $V$ are $\langle e_1\rangle$ and $C_k:=\langle ke_1+e_2\rangle$, where $0\leq k<\ell$.

To prove Proposition \ref{Prop_EvenDegreeFieldOfDefn}, we will need the following small but useful lemma.
\begin{lemma}\label{Lemma_DistinctCkSubgroups}
Fix an integer $n\in\Z^+$, and let $V$ be the $\Z/n\Z$-span of unit column vectors $e_1,e_2$ over $\Z/n\Z$. Then for all integers $1\leq k,m<n$, one has that $\langle ke_1+e_2\rangle=\langle me_1+e_2\rangle$ if and only if $k=m$. 
\end{lemma}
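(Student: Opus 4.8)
The statement to prove is Lemma~\ref{Lemma_DistinctCkSubgroups}: for integers $1\le k,m<n$, the cyclic submodules $\langle ke_1+e_2\rangle$ and $\langle me_1+e_2\rangle$ of $V=(\Z/n\Z)^2$ coincide if and only if $k=m$. One direction is trivial, so the plan is to prove that equality of the two submodules forces $k\equiv m$, and then to upgrade ``$\equiv$'' to ``$=$'' using the range restriction $1\le k,m<n$.

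The key observation is that $ke_1+e_2$ has \emph{exact order $n$} in $V$: its second coordinate is the unit $1\in\Z/n\Z$, so any scalar $t$ killing it must satisfy $t\cdot 1=0$, i.e. $t\equiv 0\pmod n$. Hence $\langle ke_1+e_2\rangle$ is a free rank-one $\Z/n\Z$-module of order exactly $n$, and likewise for $\langle me_1+e_2\rangle$. First I would suppose $\langle ke_1+e_2\rangle=\langle me_1+e_2\rangle$. Then $ke_1+e_2=t(me_1+e_2)$ for some $t\in\Z/n\Z$. Comparing second coordinates gives $1\equiv t\pmod n$, so $t$ is a unit (indeed $t\equiv 1$); comparing first coordinates then gives $k\equiv tm\equiv m\pmod n$. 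Since $1\le k,m<n$, the congruence $k\equiv m\pmod n$ together with $|k-m|<n$ forces $k=m$.

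I do not expect any genuine obstacle here; this is an elementary finite-module computation. The only point requiring a little care is the claim that the generating relation can be written as $ke_1+e_2=t(me_1+e_2)$ for a single scalar $t$ --- this is exactly the definition of the cyclic submodule $\langle me_1+e_2\rangle$ as $\{\,s(me_1+e_2):s\in\Z/n\Z\,\}$, so membership of $ke_1+e_2$ in that set yields such a $t$. After that, everything follows by reading off coordinates. I would present the argument in essentially the three sentences above.
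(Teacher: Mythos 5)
Your proof is correct and follows essentially the same route as the paper's: write one generator as a scalar multiple of the other, compare second coordinates to force the scalar to be $1$, and conclude $k\equiv m\pmod n$, hence $k=m$ by the range restriction. The preliminary remark about exact order is true but not needed for the argument.
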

\begin{proof}
If $\langle ke_1+e_2\rangle =\langle me_1+e_2\rangle$, then $me_1+e_2\in \langle ke_1+e_2\rangle$, and so for some $x\in\Z$ we have 
\[
me_1+e_2=x(ke_1+e_2)=xke_1+xe_2.
\]
By $\Z/n\Z$-linear independence of $e_1$ and $e_2$, it follows that $m\equiv xk\pmod n$ and $1\equiv x\pmod n$; we deduce that $m\equiv k\pmod n$, whence from $1\leq k,m<n$ we conclude that $k=m$.
\end{proof}

\subsection{Orbits and fields of definition}\label{Subsect_FieldsOfDefn}
Given an elliptic curve $E_{/F}$ and a torsion point $R\in E[\tors]$, the \textit{field of definition of $R$ over $F$,} denoted by $F(R)$, is the extension of $F$ of least degree over which $R$ is rational. Equivalently, it is the fixed field of the stabilizer of $R$ under the action of $G_F$ on $E[n]$. 

Say $R\in E(\oQ)$ has order $n$, and let $G:=\rho_{E,n}(G_F)$. We will write $\oo_G(R)$ to denote the orbit of $R$ under $G$ (also called the \textit{$G$-orbit} of $R$). A key observation regarding orbits and fields of definition is the following. Since the stabilizer of $R$ under $G$ is $\Gal(F(E[n])/F(R))$, the orbit-stabilizer theorem implies that
\[
\# \oo_G(R)=[F(R):F].
\]
This size is invariant under a change of basis for $E[n]$. 

A similar description applies to the field of definition of a cyclic isogeny. Given an elliptic curve $E_{/F}$ and a cyclic $n$-isogeny $C\in \Cyc(E,n)$, the field of definition of $C$ over $F$, denoted by $F(C)$, is the least extension of $F$ over which $C$ is stable under the action of Galois. By Galois theory, $F(C)$ is also the fixed field of the stabilizer of $C$ under the action of $G_F$ on $\Cyc(E,n)$. The orbit-stabilizer theorem then implies that
\[
\# \oo_G(C)=[F(C):F],
\]
which is invariant under a change of basis for $E[n]$.

\subsection{Subgroups of $\GL_2(\ell)$}\label{Sect_SubgpsOfGL2}
In this subsection, we will define the groups which appear in Theorem \ref{Thm_BaseChangeImprovement}.

For $\ell\geq 3$, let us define the split Cartan group mod-$\ell$ as the subgroup of diagonal matrices,
\[
C_s(\ell):=\left\lbrace \begin{bmatrix}
a&0\\
0&d\end{bmatrix}\in \GL_2(\ell)\right\rbrace.
\]
Its normalizer is denoted as $N_s(\ell)$; explicitly, we have
\[
N_s(\ell)=C_s(\ell)\cup \begin{bmatrix}
0&1\\
1&0
\end{bmatrix}C_s(\ell).
\]

We will also define the nonsplit Cartan group mod-$\ell$, denoted $C_{ns}(\ell)$, as follows. For a prime $\ell\in\Z^+$, we will write $\F_\ell:=\Z/\ell\Z$.
Fix the least positive integer $\epsilon$ which generates $\F_\ell^\times$: then we have 
\[
C_{ns}(\ell)=\left\lbrace \begin{bmatrix}
a&b\epsilon\\
b&a
\end{bmatrix}\in \GL_2(\ell)\right\rbrace.
\]
This is the regular representation of $\F_\ell[\sqrt{\epsilon}]^\times$ acting on itself via multiplication with respect to the basis $\lbrace 1,\sqrt{\epsilon}\rbrace$. Its normalizer is
\[
N_{ns}(\ell)=C_{ns}(\ell)\cup \begin{bmatrix}
1&0\\
0&-1
\end{bmatrix}C_{ns}(\ell).
\]
Let us also define the Borel subgroup mod-$\ell$ as the subgroup of upper triangular matrices,
\[
B(\ell):=\left\lbrace \begin{bmatrix}
a&b\\
0&d
\end{bmatrix}\in \GL_2(\ell)\right\rbrace.
\]
Also appearing in Theorem \ref{Thm_BaseChangeImprovement} is the mod-$\ell$ subgroup
\[
G(\ell):=\left\langle \begin{bmatrix}
1&0\\
0&-1
\end{bmatrix},C_{ns}(\ell)^3\right\rangle,
\]
which has index $3$ in $N_{ns}(\ell)$ when $\ell\geq 5$. Finally, we will use $Z(\ell)$ to denote the subgroup of scalar matrices in $\GL_2(\ell)$.

\subsection{Orbits under a subgroup}
In this subsection, we will describe how orbits under the action of a finite group compare to orbits under its subgroups. This is particularly important for Theorem \ref{Thm_ModEllRep_ofEllCurve_With1_EllIsogeny} (and the part of the proof of Theorem \ref{Thm_BaseChangeImprovement} from \cite{Gen24}). Given a finite group $G$ which acts on a set $X$, one has for any subgroup $H\subseteq G$ that
\begin{equation}\label{Eqn_SubgroupOrbitDivisibility}
\#\oo_G(x)\mid [G:H]\cdot \#\oo_H(x).
\end{equation}

As noted in $\S \ref{Subsect_FieldsOfDefn}$, for an elliptic curve $E_{/F}$ and an integer $n\in\Z^+$, for any cyclic subgroup $C\in \Cyc(E,n)$ one has $\# \oo_{\rho_{E,n}(G_F)}(C)=[F(C):F]$. Thus, for any subgroup $N\subseteq \GL_2(n)$ with $\rho_{E,n}(G_F)\subseteq N$, we have by Equation \eqref{Eqn_SubgroupOrbitDivisibility} that
\[
\#\oo_N(C)\mid [N:\rho_{E,n}(G_F)]\cdot [F(C):F].
\]
In particular, if a prime $p\mid \#\oo_N(C)$ does not divide $[N:\rho_{E,n}(G_F)]$, then $p\mid [F(C):F]$. 

\subsection{The isogeny character}
In this subsection, we will describe some results on the \textit{isogeny character} of an elliptic curve. This is necessary for the proofs of Theorems \ref{Thm_IsogeniesUnderBaseChange} and \ref{Thm_ModEllRep_ofEllCurve_With1_EllIsogeny}, and Proposition \ref{Prop_EvenDegreeFieldOfDefn}. 

Recall that for an elliptic curve $E_{/F}$ with a finite cyclic subgroup $C\subseteq E(\oQ)$ of order $n$, if $C$ is $G_{F}$-stable, then the action of $G_{F}$ on $C$ induces a homomorphism $r\colon G_{F}\rightarrow (\Z/n\Z)^\times$ called the \textit{isogeny character of $C$.} This character is described as follows: writing $C:=\langle P\rangle$, for $\sigma\in G_{F}$ one has 
\[
P^\sigma=r(\sigma)\cdot P.
\] 

In Theorem \ref{Thm_ModEllRep_ofEllCurve_With1_EllIsogeny}, 
part b. will quickly follow from part a. The proof of part a. analyzes the isogeny character of an elliptic curve over $F$, using the following uniformity result from Larson and Vaintrob. Recall that a CM elliptic curve $E_{/F}$ has \textit{rationally defined CM over $F$} if its CM field is contained in $F$; in such a case, we say that $F$ has \textit{rational CM.}
\begin{theorem}\cite[Theorem 1]{LV14}\label{Thm_LV}
Let $F$ be a number field. Then there exists a finite set $S_{F}$ of prime numbers such that the following holds. Let $\ell\in \Z^+$ be a prime not in $S_{F}$, and let $E_{/F}$ be an elliptic curve. Suppose that $E[\ell]\otimes_{\F_\ell}\overline{\F_\ell}$ is reducible, with a degree $1$ associated character $r\colon G_{F}\rightarrow \overline{\F_\ell}^\times$. Then one of the following holds.
\begin{enumerate}[(1)]
\item There exists a CM elliptic curve $E'_{/F}$ with rationally defined CM over $F$, and associated characters $\psi,\psi'\colon G_{F}\rightarrow \overline{\F_\ell}^\times$ such that
\[
\rho_{E',\ell}(G_{F})\otimes_{\F_\ell}\overline{\F_\ell}=\emph{im}\begin{bmatrix}
\psi&0\\
0&\psi'
\end{bmatrix}
\]
up to conjugation, and
\[
r^{12}=\psi^{12}.
\]
\item GRH fails for $F[\sqrt{-\ell}]$, and one has
\[
r^{12}=\chi_\ell^{6}
\]
where $\chi_\ell\colon G_{F}\rightarrow \F_\ell^\times$ is the mod-$\ell$ cyclotomic character. Moreover, in this case one has that $E[\ell]$ is reducible over $\F_\ell$, and $\ell\equiv 3\pmod 4$.
\end{enumerate}
\end{theorem}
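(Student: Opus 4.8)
The plan is to follow the strategy of Larson and Vaintrob, which works directly with the subquotient character coming from the reducibility hypothesis. Semisimplify $E[\ell]\otimes_{\F_\ell}\overline{\F_\ell}$ as a sum of two characters $r\oplus r'$ of $G_{F_0}$; by the Weil pairing $rr'=\chi_\ell$, and $r$ is the given degree-$1$ character. One must distinguish the \emph{niveau $1$} case, where $E[\ell]$ is already reducible over $\F_\ell$ (so $r,r'$ are $\F_\ell^\times$-valued), from the \emph{niveau $2$} case, where $E[\ell]$ is irreducible over $\F_\ell$ but splits over $\F_{\ell^2}$; in the latter $r'$ is the $\Frob_\ell$-conjugate of $r$ and $E$ has supersingular reduction at every prime $\lambda\mid\ell$ of $F_0$. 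The whole argument is then to pin $r$ down modulo twelfth powers and extract from the residual freedom either a CM structure or a failure of GRH. For the local analysis at $\lambda\mid\ell$, using that $\ell$ is unramified in $F_0$, Serre's description of the tame inertia action together with the classification of reduction types gives: in the good-ordinary and potentially multiplicative cases $\{r,r'\}|_{I_\lambda}=\{1,\chi_\ell\}$ up to a tame twist of order dividing $12$ (the automorphism groups of special fibres have order in $\{1,2,3,4,6\}$ and, with the multiplicative quadratic twist, combine into order dividing $12$ once $\ell>3$); in the good-supersingular case $r|_{I_\lambda}$ is a level-$2$ fundamental character up to the same twist, with its $(\ell+1)$-th power equal to $\chi_\ell|_{I_\lambda}$. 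Hence $r^{12}|_{I_\lambda}=\chi_\ell^{a_\lambda}|_{I_\lambda}$ for a controlled exponent $a_\lambda$ ($\in\{0,12\}$ in niveau $1$), and in niveau $2$ the relation $r\cdot r^{\Frob_\ell}=\chi_\ell$ forces $r^{12}$ to look like $\chi_\ell^{6}$ on inertia above $\ell$, which is the source of the exponent $6$ and of the constraint $\ell\equiv 3\pmod 4$ appearing in case (2).

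The next step is to control the conductor away from $\ell$. For $v\nmid\ell$, potential semistable reduction together with $\ell>3$ forces $\rho_{E,\ell}(I_v)$ to be cyclic of order dividing $12$ (potentially good reduction: order in $\{1,2,3,4,6\}$; potentially multiplicative: order at most $2$, the wild unipotent part vanishing mod $\ell$), so $r^{12}$ is unramified at every such $v$. Combining this with the local picture at $\ell$, the character $r^{12}\chi_\ell^{-a}$, for the appropriate global exponent $a$, is unramified everywhere outside $\ell$ and is as trivial as possible on inertia above $\ell$; by class field theory it therefore factors through a quotient of $G_{F_0}$ whose order is bounded purely in terms of $F_0$ (ray-class data and units), so $r^{12}=\chi_\ell^{a}$ up to a correction character of bounded order. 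Removing that correction is precisely what the global argument below must accomplish.

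The heart of the proof is an effective Chebotarev input. For a prime $v\nmid\ell$ of good reduction, $r(\Frob_v)\equiv\alpha_v\pmod{\lambda}$ for a Frobenius eigenvalue $\alpha_v$, which is an algebraic integer with $\alpha_v\bar\alpha_v=\Nm v$ and $|\alpha_v|=\sqrt{\Nm v}$ at every archimedean place. Under GRH one produces such a $v$ with $\Nm v$ tiny relative to $\ell$ and with $\Frob_v$ in a prescribed conjugacy class, the relevant auxiliary field being $F_0[\sqrt{-\ell}]$ because the mod-$\lambda$ behaviour of the correction character, equivalently of $\alpha_v^2/\Nm v$, is governed by whether $v$ splits there. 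Since $\Nm v$ is tiny, the congruence $\alpha_v^{12}\equiv (\Nm v)^{a}\cdot(\text{root of unity})\pmod{\lambda}$ has both sides of bounded archimedean size and hence is an honest equality of algebraic integers for all such $v$; this equality says the characteristic-zero Frobenius eigenvalues of $E$ behave like those of a CM abelian variety, and it produces a CM elliptic curve $E'_{/F_0}$ with rational CM whose associated character $\psi$ satisfies $\psi^{12}=r^{12}$, which is case (1). The only escape is that no suitable small $v$ of the required splitting type exists; granting GRH for $F_0$ itself, this can happen only if GRH fails for $F_0[\sqrt{-\ell}]$, and the niveau-$1$ bookkeeping then yields $r^{12}=\chi_\ell^{6}$, $E[\ell]$ reducible over $\F_\ell$, and $\ell\equiv 3\pmod 4$, which is case (2). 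The finite set $S_{F_0}$ absorbs $\ell\le 3$, the primes ramified in $F_0$, and the finitely many primes below the thresholds at which the effective Chebotarev estimate and the class-field-theory bookkeeping cease to apply.

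The main obstacle is this last step: making the GRH-powered effective Chebotarev bound sharp enough, in the correct field $F_0[\sqrt{-\ell}]$, to upgrade a congruence modulo $\lambda$ into an exact identity of algebraic integers, and then carefully propagating the twelfth power — and its halving to a sixth power in the supersingular/degenerate case — uniformly across all reduction behaviours at $\ell$ and across all quadratic, cubic, quartic and sextic twists. Pinning down exactly which finitely many primes must be quarantined into $S_{F_0}$ is part of the same bookkeeping, and is where I expect most of the technical care to be required.
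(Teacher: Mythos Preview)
The paper does not prove this theorem. It is stated with the citation \cite[Theorem 1]{LV14} and is used as a black-box input to the proofs of Theorems \ref{Thm_BaseChangeImprovement}, \ref{Thm_FieldOfDefinitionHaveDegreeEllWhenBorel}, and \ref{Thm_IndexInBorel}; the only additional commentary is the pair of remarks immediately following the statement, which discuss effective bounds on $S_{F_0}$ and the meaning of the associated characters. So there is no ``paper's own proof'' to compare against.

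What you have written is a reasonable high-level sketch of the Larson--Vaintrob argument itself (and, behind it, of Momose's method): semisimplify, analyse $r^{12}$ on inertia at $\ell$ according to reduction type and niveau, kill ramification away from $\ell$ via potential semistability, and then use an effective Chebotarev bound under GRH for $F_0[\sqrt{-\ell}]$ to promote a congruence on Frobenius eigenvalues to an exact equality that forces a CM comparison. As a roadmap this is broadly correct, but two points are underspecified. First, the passage from ``$\alpha_v^{12}$ equals $(\Nm v)^a$ times a root of unity for enough small $v$'' to ``there exists a CM elliptic curve $E'_{/F_0}$ with rational CM and $\psi^{12}=r^{12}$'' is the genuinely nontrivial construction in \cite{LV14}, and your sketch simply asserts it; this is where the CM field and the Hecke character actually get built, and it is not automatic. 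Second, your explanation of why case (2) forces $\ell\equiv 3\pmod 4$ and niveau $1$ is hand-wavy: the point is that the niveau-$2$ relation $r\cdot r^{\Frob_\ell}=\chi_\ell$ combined with $r^{12}=\chi_\ell^{6}$ already gives information that is compatible only when $-1$ is not a square mod $\ell$, and spelling this out requires tracking the image of $r$ in $\F_{\ell^2}^\times$ rather than just on inertia. These are not fatal gaps in a sketch, but they are exactly the places where the real work in \cite{LV14} lies.
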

\begin{remark}
An effectively computable bound on the product of the primes from $S_{F}$ is given in \cite[Theorem 7.9]{LV14}. For certain quadratic fields, a significantly better bound is given by Banwait \cite[Corollary 1.9]{Ban23}. By Remark \ref{Rmk_ImplicitConstant}, improved bounds on primes from $S_F$ will sharpen the constant $c(F)$ which appears in Theorems \ref{Thm_BaseChangeImprovement} and \ref{Thm_ModEllRep_ofEllCurve_With1_EllIsogeny},
and Proposition \ref{Prop_EvenDegreeFieldOfDefn}. 
\end{remark}
\begin{remark}\label{Rmk_AssocChar}
For a prime $\ell\not\in S_{F}$, if an elliptic curve $E_{/F}$ has an $F$-rational $\ell$-isogeny, then $E[\ell]$ is reducible and the conclusions of Theorem \ref{Thm_LV} hold for the isogeny character $r\colon G_{F}\rightarrow \F_\ell^\times$. 
In part (1) of Theorem \ref{Thm_LV}, if we assume that $\ell$ is unramified in the endomorphism ring of $E'$, we find that the CM mod-$\ell$ Galois representation $G':=\rho_{E',\ell}(G_{F})$ is contained in the split or nonsplit Cartan subgroup of $\GL_2(\ell)$, up to conjugacy. In the split case, $E'[\ell]$ is a reducible $G_{F}$-module, and the associated character $\psi$ takes values in $\F_\ell^\times$; in particular, $\psi$ is the isogeny character of an $F$-rational $\ell$-isogeny on $E'$. 
However, in the nonsplit case, $E'[\ell]$ is irreducible over $\F_\ell$ but absolutely reducible: we have that $G'\otimes_{\F_\ell} \F_\ell[\sqrt{\epsilon}]$ is diagonalizable (where $\epsilon$ is from the definition of $C_{ns}(\ell)$ in $\S \ref{Sect_SubgpsOfGL2}$), since the eigenvalues of matrices in $C_{ns}(\ell)$ come in pairs $a\pm b\sqrt{\epsilon}$; thus, $\psi$ takes values in $\F_\ell[\sqrt{\epsilon}]^\times$.
\end{remark}
The following short lemma concerns the degree of a field of definition of a torsion point, over the field of definition of its cyclic subgroup. It will be used in the proof of Theorem \ref{Thm_IsogeniesUnderBaseChange}.
\begin{lemma}\label{Lemma_DegreeOfTorsionOverIsogeny}
Let $F/\Q$ be an algebraic extension, $E_{/F}$ an elliptic curve and $P\in E(\oQ)$ a point of order $n\in\Z^+$. Setting $C:=\langle P\rangle$, one has
\[
[F(P):F(C)]\mid \varphi(n),
\]
where $\varphi\colon \Z^+\rightarrow \Z^+$ is Euler's totient function.
\end{lemma}
\begin{proof}
Over the extension $F(C)$, the subgroup $C$ is Galois stable. This induces an isogeny character
\[
r\colon G_{F(C)}\rightarrow (\Z/n\Z)^\times.
\]
One checks that the fixed field of this character is $F(P)$. Thus, modding out by the kernel gives a faithful representation
\[
r\colon \Gal(F(P)/F(C))\hookrightarrow (\Z/n\Z)^\times,
\]
from which we conclude that $[F(P):F(C)]\mid \#(\Z/n\Z)^\times=\varphi(n)$.
\end{proof}

\section{The Proofs of Theorem \ref{Thm_IsogeniesUnderBaseChange} and Proposition \ref{Prop_EvenDegreeFieldOfDefn}}
\subsection{The proof of Theorem \ref{Thm_IsogeniesUnderBaseChange}}
First, we will prove Theorem \ref{Thm_IsogeniesUnderBaseChange} using Proposition \ref{Prop_EvenDegreeFieldOfDefn}.

\begin{proof}[Proof of Theorem \ref{Thm_IsogeniesUnderBaseChange}]
Given the constant $c:=c(F)$ from Proposition \ref{Prop_EvenDegreeFieldOfDefn}, let $B:=B(F)\in\Z^+$ be the product of the primes $p\in\Z^+$ with $p\leq c$.

Let $E_{/F}$ be an elliptic curve. Fix an extension $L/F$ such that $[L:F]$ is coprime to $B$. Let $C\subseteq E(\oQ)$ be a finite cyclic subgroup that is $L$-rational; we will show that $C$ must be $F$-rational. Write out the prime factorization $|C|=\prod_{i=1}^r\ell_i^{a_i}$; then we have a primary decomposition $C=\langle C_1,C_2,\ldots, C_r\rangle$, where $C_i$ is the unique subgroup of $C$ whose order is $\ell_i^{a_i}$. We see that if each $C_i$ is $G_F$-stable, then so is $C$.
Thus, without loss of generality we will assume that $|C|=\ell^n$ is a prime power. 

We will proceed by induction on $n\geq 1$. If $n=1$, then $C$ is an $L$-rational $\ell$-isogeny, and the degree $[F(C):F]\leq \ell+1$. If $\ell\leq c$, then each prime $p$ dividing $[F(C):F]$ is at most $c$, which by definition implies $p\mid B$. However, since $F(C)\subseteq L$ and $[L:F]$ is coprime to $B$, so is $[F(C):F]$; this forces $[F(C):F]=1$. If $\ell> c$, then by Proposition \ref{Prop_EvenDegreeFieldOfDefn} we have that $2\mid [F(C):F]$, and thus $2\mid [L:F]$, which is impossible since $B$ is even and $[L:F]$ is coprime to $B$. We conclude that $C$ is $F$-rational when $n=1$. (We have also just shown that the prime degrees of $L$-rational isogenies are uniformly bounded by $c(F)$.)

Assume next that $n\geq 2$ and the result is true for cyclic $\ell^k$-isogenies, where $1\leq k<n$. This inductive hypothesis implies that $C_{\ell^{n-1}}=\ell C$ is $F$-rational, so that $[F(C):F]=[F(C):F(\ell C)]$. Thus, to show that $C$ is $F$-rational, it suffices to show that if $[F(C):F(\ell C)]\neq 1$, then $[F(C):F(\ell C)]$ is divisible by a prime $p\leq c$, an impossibility. Let us fix a generator $P$ of $C$. Then we have the following diagram of field extensions:
\[
\begin{tikzcd}[every arrow/.append style={dash}]
&F(P)\\
F(\ell P)\arrow[ur]\arrow[ddr]&\\
&F(C)\arrow[uu]\\
&F(\ell C)\arrow[u]
\end{tikzcd}
\]
It follows that 
\[
[F(C):F(\ell C)]=\frac{[F(P):F(\ell P)]\cdot [F(\ell P):F(\ell C)]}{[F(P):F(C)]}.
\]
By Lemma \ref{Lemma_DegreeOfTorsionOverIsogeny} we have $[F(\ell P):F(\ell C)]\mid \varphi(\ell^{n-1})=\ell^{n-2}(\ell-1)$, and by \cite[Proposition 4.6]{GJN20} we know that
$[F(P):F(\ell P)]\mid \ell^2(\ell-1)$, whence it follows that
\[
[F(C):F(\ell C)]\mid \ell^{n}(\ell-1)^2.
\]

Suppose for contradiction that $F(C)\neq F$. Thus, there exists a common prime divisor $p$ of $\ell^{n}(\ell-1)^2$ and $[L:F]$, i.e., there exists prime $p\mid \ell(\ell-1)$ with $p\mid [L:F]$. If $\ell\leq c$ then $p\leq c$, which by definition implies $p\mid B$ -- this is impossible since $[L:F]$ is coprime to $B$. We then must have $\ell>c$, which by Proposition \ref{Prop_EvenDegreeFieldOfDefn} implies that $[F(\ell^{n-1}C):F]$ is even; however, this is impossible since $\ell^{n-1}C$ is $F$-rational by the inductive hypothesis. 
We thus conclude that $C$ is $F$-rational. This completes the proof of Theorem \ref{Thm_IsogeniesUnderBaseChange} using Proposition \ref{Prop_EvenDegreeFieldOfDefn}.
\end{proof}
\subsection{The proof of Proposition \ref{Prop_EvenDegreeFieldOfDefn}}
To finish the proof of Theorem \ref{Thm_IsogeniesUnderBaseChange}, we must now prove Proposition \ref{Prop_EvenDegreeFieldOfDefn}. 

\begin{proof}[Proof of Proposition \ref{Prop_EvenDegreeFieldOfDefn}]

Fix a prime $\ell\in\Z^+$ such that $\ell\geq \max\lbrace 74, 15[F:\Q]+2\rbrace$, $\ell$ is unramified in $F$ and $\ell\not\in S_{F}$.
We must show that for an elliptic curve $E_{/F}$, for any cyclic subgroup $C\in\Cyc(E,\ell)$ one has that $[F(C):F]$ is even.
By Theorem \ref{Thm_BaseChangeImprovement}, our assumptions on $\ell$ imply that, up to conjugacy, $G:=\rho_{E,\ell}(G_{F})$ either equals one of $\GL_2(\ell)$, $N_{ns}(\ell)$ or $G(\ell)$, or that $G$ is contained in $N_s(\ell)$. The groups $\GL_2(\ell)$ and $N_{ns}(\ell)$ act transitively on $E[\ell]\ssm \lbrace O\rbrace$, and thus act transitively on Cyc$(E,\ell)$, which implies that $[F(C):F]=\ell+1$ for all $C\in$ Cyc$(E,\ell)$ when $G\in \lbrace \GL_2(\ell), N_{ns}(\ell)\rbrace$. If $G=G(\ell)$, then $[N_{ns}(\ell):G]=3$, so by Equation \eqref{Eqn_SubgroupOrbitDivisibility} we have $\ell+1\mid 3\cdot [F(C):F]$ for all $C\in \Cyc(E,\ell)$, which implies that each $[F(C):F]$ is even.

Let us assume then that $G\subseteq N_s(\ell)$ with respect to a basis $\lbrace P,Q\rbrace$. By Theorem \ref{Thm_BaseChangeImprovement}, we find that $C_s(\ell)^e\subseteq G$ for some $e\in \lbrace 1,2,3,4,6\rbrace$, and that $G$ contains $Z(\ell)$, the subgroup of scalars of $\GL_2(\ell)$. Note that $G\not\subseteq C_s(\ell)$ since otherwise $E$ has (two distinct) $F$-rational $\ell$-isogenies, which contradicts $\ell\not\in S_{F}$.

Observe that the subgroups $\langle P\rangle$ and $\langle Q\rangle$ have quadratic fields of definition over $F$, since the subgroup $H:=G\cap C_s(\ell)$ has index two in $G$; consequently, both $[F(\langle P\rangle):F]$ and $[F(\langle Q\rangle):F]$ are even. Thus, we will restrict our attention to showing that the remaining order $\ell$ subgroups $C_k:=\langle kP+Q\rangle$, where $0<k<\ell$, have even degree fields of definition over $F$.

Since $G\not\subseteq C_s(\ell)$, there exists an antidiagonal matrix $\gamma:=\begin{bmatrix}
0&b\\
c&0
\end{bmatrix}\in G$. The action of $\gamma$ on $\Cyc(E,\ell)$ sends $C_k$ to $C_{bc^{-1}k^{-1}}$; thus we have $C_{bc^{-1}k^{-1}}\in \oo_G(C_k)$. One can check that if $\gamma\cdot C_k=\gamma \cdot C_m$, then $C_{bc^{-1}k^{-1}}=C_{bc^{-1}m^{-1}}$, which by Lemma \ref{Lemma_DistinctCkSubgroups} implies that $k=m$. Thus, if $C_k\neq  C_{bc^{-1}k^{-1}}$ for all $0<k<\ell$, then all orbits $\oo_G(C_k)$ have even size, as each pair of distinct subgroups $C_k$ and $\gamma\cdot C_k$ lie in the same $G$-orbit; this would then imply that the field of definition of each $C_k$ has even degree over $F$, which finishes the proof. Suppose then that for some $k$ we have $C_k=C_{k^{-1}bc^{-1}}$; by Lemma \ref{Lemma_DistinctCkSubgroups}, this forces $c\equiv bk^{-2}\pmod \ell$, whence we have $\gamma=\begin{bmatrix}
0&b\\
bk^{-2}&0
\end{bmatrix}$. Since $Z(\ell)\subseteq G$, we can scale $\gamma$ by $b^{-1}k$ and deduce that $\begin{bmatrix}
0&k\\
k^{-1}&0
\end{bmatrix}\in G$.

To this end, we may assume that $\gamma:=\begin{bmatrix}
0&b\\
b^{-1}&0
\end{bmatrix}\in G$ for some $1\leq b<\ell$. We check that $\gamma$ sends any subgroup $C_k$ to $C_{b^2k^{-1}}$. By Lemma \ref{Lemma_DistinctCkSubgroups}, we know that $C_k=C_{b^2k^{-1}}$ if and only if $k\equiv \pm b\pmod \ell$. Therefore, to show that all orbits of $\Cyc(E,\ell)$ have  even size, it is enough to show that $C_b$ and $C_{-b}:=C_{\ell-b}$ are in the same $G$-orbit (note that $C_{-b}$ is distinct from $C_b$). 

With some calculation, one can show that $C_b$ and $C_{-b}$ are in the same $G$-orbit if and only if $\begin{bmatrix}
1&0\\
0&-1
\end{bmatrix}\in G$ or $\begin{bmatrix}
0&b\\
-b^{-1}&0
\end{bmatrix}\in G$. Since $\begin{bmatrix}
0&b\\
-b^{-1}&0
\end{bmatrix}=\begin{bmatrix}
1&0\\
0&-1
\end{bmatrix}\cdot \gamma$, we deduce that $C_b$ and $C_{-b}$ are in the same $G$-orbit if and only if $\gamma_0:=\begin{bmatrix}
1&0\\
0&-1
\end{bmatrix}\in G$.

We will show that $\gamma_0\in G$. To do this, we will make a few more reductions.  Let us set $H:=G\cap C_s(\ell)$. 
By Theorem \ref{Thm_BaseChangeImprovement}, we have $C_s(\ell)^e\subseteq H$ for some $e\in \lbrace 1,2,3,4,6\rbrace$. It follows that $\gamma_0\in H$ if $-1$ is an $e$'th power modulo $\ell$. This latter part is true if and only if $\ell=2$ or $\frac{\ell-1}{\gcd(e,\ell-1)}$ is even. Thus, $-1$ is an $e$'th power modulo $\ell$ unless $e\in\lbrace 2,4,6\rbrace$ and $\ell\equiv 3\pmod 4,$ or $e=4$ and $\ell\equiv 5\pmod 8$. 

We will show that in either of these two cases, we still have $\gamma_0\in H$. Let $\det(H)\subseteq\F_\ell^\times$ denote the subgroup of determinants of matrices in $H$, and let $\mu_4(
\F_\ell)$ denote the subgroup of the fourth roots of unity in $\F_\ell$. First, we will show that if $\mu_4(
\F_\ell)\subseteq \det(H)$ then $\gamma_0\in H$.

Assume first that $\ell\equiv 3\pmod 4$.
Observe that $-1\in \det(H)$ if and only if there exists $m=\begin{bmatrix}
a&0\\
0&-a^{-1}
\end{bmatrix}\in H$ for some $a\in \F_\ell^\times$. When this happens, it follows that $am=\begin{bmatrix}
a^2&0\\
0&-1
\end{bmatrix}\in H$, noting that $H$ contains $Z(\ell)$. Since $\ell\equiv 3\pmod 4$, we find that $a^2$ has odd order. Writing this order as $|a^2|$, it follows that $(am)^{|a^2|}=\gamma_0\in H$. Next, assume that $e=4$ and $\ell\equiv 5\pmod 8$; let us write $i\in \F_\ell$ for a primitive fourth root of unity. Then we have $i\in \det(H)$ iff there exists $m=\begin{bmatrix}
a&0\\
0&ia^{-1}
\end{bmatrix}\in H$, and in such a case we have $am=\begin{bmatrix}
a^2&0\\
0&i
\end{bmatrix}\in H$. Thus $(am)^2=\begin{bmatrix}
a^4&0\\
0&-1
\end{bmatrix}\in H$. Since $8\nmid \ell-1$ we have $8\nmid |a|$, and thus $|a^4|$ is odd, so that $(am)^{2|a^4|}=\gamma_0\in H$. We conclude that in any case, if $\mu_4(\F_\ell)\subseteq \det(H)$ then $\gamma_0\in G$.

We will show that $\det(H)=\F_\ell^\times$.
Fix a prime $\Ell\mid \ell$ in $F$ and let $K:=(F)_{\Ell}$ denote the $\Ell$-adic completion of $F$ at $\Ell$. Let us use $I_K$ to denote the inertia group of $K$ (throughout this, we are working with a fixed algebraic closure $\overline{\Q_\ell}$ of $\Q_\ell$). Since $\det\rho_{E,\ell}(I_K)=\chi_\ell(I_K)$ and $\ell$ is unramified in $F$, it follows that $\det\rho_{E,\ell}(I_K)$ surjects onto $\F_\ell^\times$. 

We will argue that the same holds once we replace $F$ by the fixed field of $H$, which we denote by $M_H$.
Fix a prime $\cQ\mid \Ell$ in $M_H$, and let $L:=(M_H)_{\cQ}$ denote the completion of $M_H$ at $\cQ$. Then the inertia group $I_L$ is contained in $H$. In particular, it follows that $\ell$ is unramified in $M_H$, see also \cite[Lemme 2]{Ser72}. Consequently, $L/K$ is unramified, and so $I_K\subseteq I_L$. Thus, from $\det \rho_{E,\ell}(I_K)=\F_\ell^\times$ we deduce that $\det \rho_{E,\ell}(I_L)=\F_\ell^\times$, whence we conclude that $\det(H)=\F_\ell^\times$.

From the work above, we conclude that $\gamma_0\in G$. In particular, $C_b$ and $C_{-b}$ are distinct and in the same $G$-orbit, and so all $G$-orbits of $\Cyc(E,\ell)$ have even size, which is equivalent to the degree $[F(C):F]$ being even for all $C\in \Cyc(E,\ell)$. This concludes our proof.
\end{proof}
\section{The Proof of Theorem \ref{Thm_ModEllRep_ofEllCurve_With1_EllIsogeny}}\label{Sect_FieldsOfDefForBorelUncond}
In this section we will prove Theorem \ref{Thm_ModEllRep_ofEllCurve_With1_EllIsogeny}, which allows us to describe the mod-$\ell$ Galois representation of a non-CM elliptic curve over a number field $F$ with an $F$-rational $\ell$-isogeny, when $\ell$ is uniformly large. Unlike Theorems \ref{Thm_IsogeniesUnderBaseChange} and \ref{Thm_BaseChangeImprovement}, these results do not exclude the possibility of $F$ having rational CM. A similar analysis of isogeny characters can be found in \cite{Gen22}.

We first prove part a. of Theorem \ref{Thm_ModEllRep_ofEllCurve_With1_EllIsogeny}, which provides a relative uniform bound on the index of the mod-$\ell$ Galois representation. 
Since $\rho_{E,\ell}(G_{F})$ is non-diagonalizable, $E$ must be non-CM. 
Let $\langle P\rangle$ be an $F$-rational $\ell$-isogeny on $E$. Then we can choose $Q\in E[\ell]$ such that $\lbrace P,Q\rbrace$ is a basis for $E[\ell]$, and with respect to this basis we have
\[
G:=\rho_{E,\ell,P,Q}(G_{F})=\im \begin{bmatrix}
r&*\\
0&\chi_\ell r^{-1}
\end{bmatrix},
\]
where $\chi_\ell\colon G_{F}\rightarrow \F_\ell^\times$ is the mod-$\ell$ cyclotomic character, and $r$ is the isogeny character of $\langle P\rangle$. 
Let $\Gss$ be the semisimplification of $G$. 
Explicitly, we have
\[
\Gss=\im  \begin{bmatrix}
r&0\\
0&\chi_\ell r^{-1}
\end{bmatrix}.
\]
Since $E$ has exactly one $F$-rational $\ell$-isogeny, we know that $G$ is not diagonalizable, thus by \cite[Lemma 4]{Gen22} we have $\Gss\subseteq G$. It also follows by the proof of \cite[Lemma 4]{Gen22} that $\gamma:=\begin{bmatrix}
1&1\\
0&1
\end{bmatrix}\in G$, and thus $G=\langle \gamma, \Gss\rangle$. Since we also have $B(\ell)=\langle \gamma, C_s(\ell)\rangle$, we deduce that 
\begin{equation}\label{Eqn_IndexEqualsSemiSimpliIndex}
[B(\ell):G]=[C_s(\ell):\Gss].
\end{equation}
The rest of this proof will be showing that $[C_s(\ell):\Gss]\mid 864[F:\Q]$.

Since we are assuming that GRH is true, by Theorem \ref{Thm_LV} the existence of the $F$-rational $\ell$-isogeny $\langle P\rangle$ implies the following: there exists a CM elliptic curve $E'_{/F}$ whose CM field $K$ is contained in $F$, such that $E'$ has an associated character $\psi\colon G_{F}\rightarrow \overline{\F_\ell}^\times$ which satisfies
\begin{equation}\label{Eqn_12PowersIsogCharLV}
r^{12}=\psi^{12}.
\end{equation}
By our initial hypotheses, $\ell$ is unramified in the ring $\oo:=\End(E')$, regarded as an order of the CM field $K$. By \cite[Corollary 1.5]{BC20}, up to conjugation we have the bound
\begin{equation}\label{Eqn_UniformBoundOnModEllIndexCM}
[C_\ell(\oo):G']\mid 6[F:\Q]
\end{equation}
where $G':=\rho_{E',\ell}(G_{F})$ and $C_\ell(\oo)$ is either $C_s(\ell)$ or $C_{ns}(\ell)$ depending on whether $\ell$ is split or inert in $\oo$, respectively.

We will show that $\ell$ must be split in $\oo$. Assume to the contrary that $\ell$ is inert in $\oo$; then as noted in Remark \ref{Rmk_AssocChar}, the elements of $G'$ have eigenvalues over $\F_{\ell^2}:=F_\ell[\sqrt{\epsilon}]$ which are Galois conjugates of one another. Writing 
\[
G'\otimes_{\F_\ell}\F_{\ell^2}=\im \begin{bmatrix}
\psi&0\\
0&\psi'
\end{bmatrix},
\]
it follows that 
\begin{equation}\label{Eqn_SizeOfCMGalRepIsCharInert}
\# G'=\# \psi(G_{F}).
\end{equation}

Now, for any nonzero torsion point $R'\in E'[\ell]$, since $G'\subseteq C_{ns}(\ell)$ we have by Equations \eqref{Eqn_SubgroupOrbitDivisibility} and \eqref{Eqn_UniformBoundOnModEllIndexCM} the orbit size divisibility 
\[
\# \oo_{C_{ns}(\ell)}(R')\mid 6[F:\Q]\cdot [F(R'):F],
\]
noting that $\# \oo_{G'}(R)=[F(R'):F]$.
Since the action of $C_{ns}(\ell)$ on $E'[\ell]\ssm \lbrace O\rbrace$ is transitive, this implies that
\[
\ell^2-1\mid 6[F:\Q]\cdot [F(R'):F].
\]
However, from $r^{12}=\psi^{12}$ we find that 
\[
\gcd(\# \psi(G_{F}),12)\cdot \# r(G_{F})=\gcd(\# r(G_{F}), 12)\cdot \# \psi(G_{F}),
\]
and thus 
\[
\#\psi(G_{F})\mid 12\cdot \# r(G_{F})\mid 12(\ell-1).
\]
Since $[F(R'):F]\mid\# G'$, and because $\# G'=\#\psi(G_{F})$ by Equation \eqref{Eqn_SizeOfCMGalRepIsCharInert}, we deduce that
\[
\ell^2-1\mid 6[F:\Q]\cdot 12(\ell-1),
\]
and so
\[
\ell+1\mid 72[F:\Q].
\]
However, this is impossible since $\ell\geq 72[F:\Q]$. We deduce that $\ell$ splits in $\oo$.

Since $\ell$ splits in $\oo$, we can choose a basis for $E'[\ell]$ such that $G'$ has the form
\[
G'=\im  \begin{bmatrix}
\psi&0\\
0&\chi_\ell\psi^{-1}
\end{bmatrix}.
\]
Thus, from Equation \eqref{Eqn_12PowersIsogCharLV} it follows that $(\Gss)^{12}=(G')^{12}$.
With this, we have the containments $C_s(\ell)\supseteq \Gss\supseteq (\Gss)^{12}=(G')^{12}$, whence it follows that
\[
[C_s(\ell):\Gss]\mid [C_s(\ell): (G')^{12}].
\]
Since $[G':(G')^{12}]\mid 144$, this shows that
\[
[C_s(\ell):\Gss]\mid 144 [C_s(\ell):G'].
\]
By Equation \eqref{Eqn_UniformBoundOnModEllIndexCM},
we thus conclude that
\[
[C_s(\ell):\Gss]\mid 864[F:\Q].
\]
Since $[B(\ell):G]=[C_s(\ell):\Gss]$ by Equation \eqref{Eqn_IndexEqualsSemiSimpliIndex}, this concludes our proof of part a.

Next, we will prove part b. of Theorem \ref{Thm_ModEllRep_ofEllCurve_With1_EllIsogeny}.
With our assumptions, we can choose a basis $\lbrace P,Q\rbrace$ of $E[\ell]$ such that $G:=\rho_{E,\ell,P,Q}(G_{F})\subseteq B(\ell)$. Since $E$ has exactly one $F$-rational $\ell$-isogeny, we know that $G$ is not diagonalizable.  Thus, Equation \eqref{Eqn_SubgroupOrbitDivisibility} implies that for each subgroup $C\in \Cyc(E,\ell)$, one has the orbit size divisibility
\[
\#\oo_{B(\ell)}(C)\mid [B(\ell):G]\cdot [F(C):F].
\]
By part a., this implies that
\[
\#\oo_{B(\ell)}(C)\mid 864[F:\Q]\cdot [F(C):F].
\]

The orbits of $\Cyc(E,\ell)$ under $B(\ell)$ are easy to compute. We check that $\oo_{B(\ell)}(\langle P\rangle)=\lbrace \langle P\rangle \rbrace$ and $\oo_{B(\ell)}(\langle P+Q\rangle)=\lbrace \langle kP+Q\rangle :0\leq k<\ell\rbrace$, with the latter orbit having size $\ell$. Since $\ell>\max\lbrace 2,3,[F:\Q]\rbrace$ (from $\ell\geq c(F)$), we have $\ell\nmid 864[F:\Q]$; thus, for $C\neq \langle P\rangle$ we have $\ell\mid [F(C):F]$, which by $[F(C):F]\leq \ell+1$ forces $[F(C):F]=\ell$.

\end{document}